\newcommand{\R}{\mathbf{R}}
\newcommand{\N}{\mathbf{N}}
\newcommand{\tto}{\Rightarrow}
\newcommand{\eps}{\epsilon}
\DeclareMathOperator{\E}{\mathbf{E}}
\let\P\relax
\DeclareMathOperator{\P}{\mathbf{P}}
\theoremstyle{plain}
\newtheorem{theorem}{Theorem}
\newtheorem{lemma}[theorem]{Lemma}
\theoremstyle{definition}
\theoremstyle{remark}
\begin{document}
\title[CLT from the de Moivre--Laplace theorem]{Deriving the Central Limit Theorem from the De Moivre--Laplace Theorem}
\author{Calvin Wooyoung Chin}
\date{\today}
\begin{abstract}
The de Moivre--Laplace theorem is a special case of the central limit theorem for Bernoulli random variables, and can be proved by direct computation.
We deduce the central limit theorem for any random variable with finite variance from the de Moivre--Laplace theorem.
Our proof does not use advanced notions such as characteristic functions, the Brownian motion, or stopping times.
\end{abstract}
\maketitle

\section{Introduction}

Let $Z$ be a standard normal random variable; that is, assume
\begin{equation} \label{eq:normal_density}
\P(Z \le x) = \int_{-\infty}^x \frac{e^{-t^2/2}}{\sqrt{2\pi}} \,dt
\qquad \text{for all $x \in \R$.}
\end{equation}
If $S_1,S_2,\ldots$ are random variables and $W$ is a normal random variable, we write $S_n \tto W$ to mean
\[ \lim_{n\to\infty} \P(S_n \le x) = \P(W \le x) \qquad \text{for all $x \in \R$.} \]
In this case, $S_n$ is said to \emph{converge in distribution} to $W$.

The de Moivre--Laplace theorem, first published in 1738 \cite{DeM01} in a weak form, states that the binomial distribution may be approximated by the normal distribution.

\begin{theorem}[de Moivre--Laplace] \label{thm:dl}
Let $p \in (0,1)$, and $X_1,X_2,\ldots$ be independent and identically distributed random variables satisfying
\[ \P(X_1 = 1) = p \quad\text{and}\quad \P(X_1 = 0) = 1-p. \]
Then,
\[
\frac{X_1+\cdots+X_n - np}{\sqrt{np(1-p)}} \tto Z.
\]
\end{theorem}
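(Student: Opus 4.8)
The plan is to prove a sharp pointwise estimate for the probabilities $\P(S_n = k)$ and then sum them up. Write $q := 1-p$, $\sigma := \sqrt{pq}$, and $S_n := X_1 + \cdots + X_n$, so that $S_n$ has the $\Bin(n,p)$ distribution and $\P(S_n = k) = \binom{n}{k} p^k q^{n-k}$ for $0 \le k \le n$. The central step, which I expect to require the most care, is a \emph{local limit theorem}: for every fixed $M > 0$,
\[
\P(S_n = k) = \frac{1}{\sigma\sqrt{n}}\cdot\frac{1}{\sqrt{2\pi}}\,e^{-t^2/2}\,\bigl(1 + o(1)\bigr) \qquad (n \to \infty),
\]
uniformly over all integers $k$ for which $t := (k-np)/(\sigma\sqrt{n})$ satisfies $|t| \le M$.

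To establish this I would substitute Stirling's formula in the quantitative form $m! = \sqrt{2\pi m}\,(m/e)^m e^{r_m}$, $0 < r_m < 1/(12m)$, into $\binom{n}{k} = n!/\bigl(k!\,(n-k)!\bigr)$. Because $|t| \le M$ forces both $k$ and $n-k$ to be of order $n$, the three remainders together contribute a factor $1 + O(1/n)$, the algebraic prefactor $\sqrt{n/\bigl(2\pi k(n-k)\bigr)}$ tends to $1/(\sigma\sqrt{n})$ uniformly, and the whole problem reduces to expanding $-k\log\frac{k}{np} - (n-k)\log\frac{n-k}{nq}$. Writing $k = np(1+u)$ and $n-k = nq(1-v)$, so that $npu = nqv = k-np = t\sigma\sqrt{n}$ and hence $u,v = O(n^{-1/2})$ uniformly, a second-order Taylor expansion of $\log(1+u)$ and $\log(1-v)$ collapses this expression to
\[
-\tfrac12\bigl(npu^2 + nqv^2\bigr) + o(1) = -\tfrac12 (k-np)^2\Bigl(\tfrac1{np} + \tfrac1{nq}\Bigr) + o(1) = -\tfrac{t^2}{2} + o(1),
\]
uniformly over the stated range. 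Multiplying the three factors then gives the displayed estimate, and the only delicate point is to check honestly that every error term above is uniform in $k$.

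To pass from this to the distribution function, fix $x \in \R$ and $M > |x|$, and set $t_k := (k-np)/(\sigma\sqrt{n})$ and $\Delta_n := 1/(\sigma\sqrt{n})$, the common gap between consecutive values $t_k$. Then
\[
\P\Bigl(-M < \frac{S_n - np}{\sigma\sqrt{n}} \le x\Bigr) = \sum_{k\,:\,-M < t_k \le x}\P(S_n = k),
\]
and by the uniformity in the local limit theorem each summand equals $\tfrac1{\sqrt{2\pi}}\,e^{-t_k^2/2}\,\Delta_n\,(1+o(1))$ with a common $o(1)$, so the right-hand side differs by a vanishing error from the Riemann sum $\sum_k \tfrac1{\sqrt{2\pi}}\,e^{-t_k^2/2}\,\Delta_n$, which, having mesh $\Delta_n \to 0$, converges to $\int_{-M}^{x} \tfrac1{\sqrt{2\pi}}\,e^{-t^2/2}\,dt$. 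It remains to discard the artificial lower cut-off: by Chebyshev's inequality $\P\bigl((S_n-np)/(\sigma\sqrt{n}) \le -M\bigr) \le \Var(S_n)/(M^2\sigma^2 n) = 1/M^2$ for all $n$, and likewise $\P(Z \le -M) \le 1/M^2$, so splitting both $\P\bigl((S_n-np)/(\sigma\sqrt{n}) \le x\bigr)$ and $\P(Z \le x)$ at $-M$ gives $\limsup_{n\to\infty} \bigl|\P\bigl((S_n-np)/(\sigma\sqrt{n}) \le x\bigr) - \P(Z \le x)\bigr| \le 2/M^2$ for every $M > |x|$. Letting $M \to \infty$ shows $\P\bigl((S_n-np)/(\sigma\sqrt{n}) \le x\bigr) \to \P(Z \le x)$ for all $x$; that is, $(S_n - np)/(\sigma\sqrt{n}) \tto Z$.
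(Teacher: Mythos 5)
Your proof is correct, and it is exactly the argument the paper has in mind: the paper does not prove Theorem~\ref{thm:dl} itself but states that it ``can be proved by direct computation using Stirling's formula'' and cites \cite[Section~7.3]{Chu12}, which is precisely your local-limit-theorem-plus-Riemann-sum route. The one step worth writing out fully in a final version is the uniformity of the $o(1)$ in the exponent expansion (your $O(nu^3) = O(n^{-1/2})$ bound over $|t|\le M$), which you correctly flag as the delicate point; the Chebyshev truncation at $-M$ is a clean way to remove the cutoff.
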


Theorem~\ref{thm:dl} can be proved by direct computation using Stirling's formula
\[
\lim_{n\to\infty} \frac{n!}{\sqrt{2\pi n}(n/e)^n} = 1;
\]
see \cite[Section 7.3]{Chu12} for details.
The computation ``discovers" the formula \eqref{eq:normal_density} for the normal density without prior knowledge.
Thus, Theorem~\ref{thm:dl} might be used to motivate the introduction of the normal distribution.

Historically, Theorem~\ref{thm:dl} is the earliest instance of the celebrated Lindeberg--L\'evy central limit theorem (CLT).

\begin{theorem}[central limit theorem] \label{thm:clt}
If $X_1,X_2,\ldots$ are independent and identically distributed random variables satisfying $\E[X_1] = 0$ and $\E[X_1^2] = 1$, then
\begin{equation} \label{eq:conv_z}
\frac{X_1+\cdots+X_n}{\sqrt{n}} \tto Z.
\end{equation}
\end{theorem}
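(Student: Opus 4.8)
The plan is to reduce the general statement to the case of bounded summands by truncation, and then to run Lindeberg's replacement argument, telescoping $X_1+\cdots+X_n$ into a random walk with $\pm1$ steps whose rescaled limit is handed to us by Theorem~\ref{thm:dl} with $p=1/2$; no property of the normal law beyond Theorem~\ref{thm:dl} is used. For the reduction, fix $M>0$ and write $X_i=X_i'+X_i''$ with $X_i':=X_i\mathbf{1}\{|X_i|\le M\}-\E[X_1\mathbf{1}\{|X_1|\le M\}]$ and $X_i'':=X_i-X_i'$, so that $X_i'$ is bounded with mean $0$ while $\E X_1''=0$ and $\Var(X_1'')\to0$ as $M\to\infty$. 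Chebyshev's inequality bounds $\P\bigl(|X_1''+\cdots+X_n''|/\sqrt n>\eta\bigr)$ by $\Var(X_1'')/\eta^2$ uniformly in $n$, so a routine argument with continuity points of the normal distribution function reduces \eqref{eq:conv_z} to the same statement for the bounded variables $X_i'/\sqrt{\Var(X_1')}$, whose variance is $1$ and whose normalizing constant tends to $1$.

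Now assume $|X_1|\le M$, $\E X_1=0$, $\E X_1^2=1$. Let $\epsilon_1,\epsilon_2,\ldots$ be i.i.d.\ with $\P(\epsilon_1=\pm1)=1/2$, independent of $(X_i)$. Since $(\epsilon_i+1)/2$ is $\Ber(1/2)$, Theorem~\ref{thm:dl} gives $(\epsilon_1+\cdots+\epsilon_n)/\sqrt n\tto Z$. For a bounded $f\colon\R\to\R$ with three bounded derivatives, telescoping one coordinate at a time gives
$\E f\bigl(\tfrac{X_1+\cdots+X_n}{\sqrt n}\bigr)-\E f\bigl(\tfrac{\epsilon_1+\cdots+\epsilon_n}{\sqrt n}\bigr)=\sum_{k=1}^n\bigl(\E f(\tfrac{R_k+X_k}{\sqrt n})-\E f(\tfrac{R_k+\epsilon_k}{\sqrt n})\bigr)$,
where $R_k:=X_1+\cdots+X_{k-1}+\epsilon_{k+1}+\cdots+\epsilon_n$ is independent of $X_k$ and of $\epsilon_k$. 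Expanding $f$ around $R_k/\sqrt n$ to second order, the terms of order $0,1,2$ cancel because $X_k$ and $\epsilon_k$ have the same first two moments, and the cubic remainder is $O\bigl(n^{-3/2}(\E|X_1|^3+1)\bigr)$ per term; summing over $k$ yields $\E f\bigl(\tfrac{X_1+\cdots+X_n}{\sqrt n}\bigr)\to\E f(Z)$.

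Finally, a standard mollification (convolving $\mathbf{1}_{(-\infty,x]}$ with a smooth bump and squeezing from both sides) upgrades the convergence of $\E f(\cdot)$ for all such $f$ to $\P\bigl((X_1+\cdots+X_n)/\sqrt n\le x\bigr)\to\P(Z\le x)$ at every $x$, which together with the first step proves Theorem~\ref{thm:clt}. There is no deep obstacle here --- that is the point --- but two places demand care: the Slutsky-type bookkeeping of the truncation step (the $o(1)$ errors, the normalizing constant tending to $1$, and the choice of continuity points), and the verification that the cubic Taylor remainder is genuinely summable, which is exactly where boundedness of $X_1$, hence $\E|X_1|^3<\infty$, is used. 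One could avoid the separate truncation by using Lindeberg's original splitting inside the swap, but the two-stage version is cleaner to present.
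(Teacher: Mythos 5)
Your proof is correct, but it takes a genuinely different route from the paper's. The paper first shows (Lemma~\ref{lem:mixture}) that any simple mean-zero variable is distributed as a finite mixture of two-valued mean-zero variables, then proves the CLT for simple $X_1$ by conditioning on the mixture labels, applying Theorem~\ref{thm:dl} separately to each two-valued component (so de Moivre--Laplace is used for general $p$), and invoking the strong law of large numbers for the label frequencies; finally it passes to general $X_1$ by an $L^2$-approximation with simple variables and Chebyshev. You instead truncate to reduce to bounded summands and then run Lindeberg's replacement argument against a Rademacher walk, using Theorem~\ref{thm:dl} only for $p=1/2$ to identify the limit law of $(\eps_1+\cdots+\eps_n)/\sqrt{n}$. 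Your version buys economy of input (only the symmetric case of de Moivre--Laplace, no mixture lemma), but it pays for this by reintroducing the apparatus of smooth test functions, third-order Taylor expansion, and mollification to pass between $\E f(S_n)$ and distribution functions --- note also that the step $\E f((\eps_1+\cdots+\eps_n)/\sqrt n)\to\E f(Z)$ requires the converse of that mollification, i.e.\ that convergence in distribution implies convergence of expectations of bounded smooth $f$, which is routine but should be stated. In substance your argument is the classical Lindeberg swap (cited in the introduction as one of the known proofs the paper is departing from) with the Gaussian comparison sequence replaced by a $\pm1$ walk; the paper's proof works directly with distribution functions and avoids Taylor expansions altogether, which is precisely the novelty it is advertising. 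Both proofs share the final approximation step in spirit (your truncation versus the paper's $L^2$-approximation by simple variables), and your bookkeeping there --- the uniform-in-$n$ Chebyshev bound, $\Var(X_1')\to1$, and the continuity of the normal distribution function --- is all sound.
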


To see that Theorem~\ref{thm:dl} follows from Theorem~\ref{thm:clt}, note that $(X_1-p)/\sqrt{p(1-p)}$ in Theorem~\ref{thm:dl} has mean zero and variance one.
It is surprising that the convergence \eqref{eq:conv_z} is a \emph{universal} phenomenon observed from all random variables $X_1$ with finite variances, not just the Bernoulli ones.
This explains the ubiquity of the ``bell-shaped" curve in the real world.

Many proofs of Theorem~\ref{thm:clt} are known.
The standard proof \cite[Theorem~3.4.1]{Dur19} uses \emph{characteristic functions}, which are essentially Fourier transforms.
A related proof \cite[Subsection~2.2.3]{Tao12} uses the \emph{moment method}.

There have been efforts to find elementary proofs of Theorem~\ref{thm:clt} that avoid characteristic functions.
For instance, \cite{Tro59} replaced characteristic functions with certain linear operators on a function space.
Another proof using the \emph{Stein method} \cite{Ste86} makes use of the identity
\[ \E[f'(Z) - Zf(Z)] = 0 \]
that holds for certain well-behaved $f$.

The proof by \emph{Lindeberg swapping} \cite{Lin22} lets $Y_1,Y_2,\ldots$ to be independent standard normal random variables, and transform $(X_1+\cdots+X_n)/\sqrt{n}$ into $(Y_1+\cdots+Y_n)/\sqrt{n}$ by swapping $X_i$ for $Y_i$ one at a time.
Note that once we finish swapping, we are left with a standard normal random variable.

The last proof we would like to mention uses the \emph{Skorokhod representation}.
If $(B_t)_{t\ge0}$ is a Brownian motion, the Skorokhod representation theorem \cite[Theorem~8.1.1]{Dur19} provides us with stopping times $T_1\le T_2\le \cdots$ such that $X_1+\cdots+X_n$ has the same distribution as $B_{T_n}$.
The conclusion of the CLT follows from $T_n/n \to 1$ in probability.

In this note, we prove the CLT (Theorem~\ref{thm:clt}) in a new way, by deriving it directly from de Moivre--Laplace theorem (Theorem~\ref{thm:dl}).
In some sense, our proof is of similar spirit as the proof by Skorokhod embedding.
However, our proof is more elementary in that, for example, we do not need to construct the Brownian motion and build the theory of stopping times.

Section~\ref{sec:simple} is the crux of our proof.
There we prove the CLT when $X_1$ is \emph{simple}, that is, when it has only finitely many possible values.
Any proof of the CLT needs to deal with some measure theory, and Section~\ref{sec:gen} does that.
There we generalize the result to all $X_1$ with finite variance.
This step does not contain new idea, and might be considered routine in the eyes of experts.

\section{CLT for Simple Random Variables} \label{sec:simple}

Let us call a random variable with at mosts two possible values \emph{two-valued}.
De Moivre--Laplace theorem tells us that the CLT (Theorem~\ref{thm:clt}) holds when $X_1$ is two-valued.
Our task is to extend this result to when $X_1$ takes more than two values.

We say that a random variable $Y$ is a \emph{finite mixture} of random variables $Y_1,\ldots,Y_n$ ($n \in \N$) if there is a random number $\theta \in \{1,\ldots,n\}$ independent from $Y_1,\ldots,Y_n$ such that $Y = Y_i$ on the event $\{\theta = i\}$.
In short, we write $Y = Y_\theta$.

\begin{lemma} \label{lem:mixture}
Any simple mean-zero random variable $X$ has a distribution of a finite mixture of two-valued mean-zero random variables.
\end{lemma}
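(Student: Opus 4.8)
The plan is to write $X$, taking distinct values $a_1 < \dots < a_k$ with probabilities $p_1,\dots,p_k$ summing to $1$ and with $\sum_i p_i a_i = 0$, as a convex combination of distributions of two-valued mean-zero random variables. Since a finite mixture with mixing weights $q_j = \P(\theta = j)$ has distribution $\sum_j q_j \cdot (\text{law of } Y_j)$, proving the lemma is exactly the same as exhibiting probabilities $q_j \ge 0$, $\sum_j q_j = 1$, and two-valued mean-zero laws $\mu_j$ with $\sum_j q_j \mu_j = \text{law of } X$. (One small point to address: if $0$ is itself one of the $a_i$, the constant $0$ counts as a two-valued — indeed one-valued — mean-zero variable, so it is an allowed building block; if $a_1 = \dots = a_k$ forces $k=1$ and $X \equiv 0$, the statement is trivial.)

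First I would reduce to a greedy "peeling" construction. If $X$ is not already two-valued, it must take at least one strictly positive value and at least one strictly negative value (otherwise the mean could not be zero unless $X \equiv 0$). Pick the atom at $a_1 < 0$ (mass $p_1$) and the atom at $a_k > 0$ (mass $p_k$). The two-valued mean-zero law supported on $\{a_1, a_k\}$ must put mass $a_k/(a_k - a_1)$ on $a_1$ and mass $-a_1/(a_k - a_1)$ on $a_k$. I would peel off a multiple $q \cdot \mu$ of this law, choosing $q$ as large as possible so that the remaining signed measure $\text{law}(X) - q\mu$ still has nonnegative mass everywhere; this exhausts either the atom at $a_1$ or the atom at $a_k$. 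Crucially, the remaining measure, after renormalizing, is still a probability measure with mean zero (subtracting a mean-zero measure preserves the first moment, and subtracting total mass $q$ and renormalizing by $1/(1-q)$ keeps the mean at zero), and it is supported on strictly fewer points.

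The main obstacle — really the only thing requiring care — is making the induction bookkeeping clean: I would induct on the number $k$ of atoms. The base case $k \le 2$ is immediate. For the inductive step, the peeling above produces a probability measure $\nu$ on at most $k-1$ points with mean zero; by the inductive hypothesis $\nu$ is a finite mixture of two-valued mean-zero laws, say with weights $q'_j$ and laws $\mu_j$, and then $\text{law}(X) = q\mu + (1-q)\sum_j q'_j \mu_j$ is visibly a finite mixture of two-valued mean-zero laws, which by the remark in the first paragraph is exactly what the lemma asserts. I should double-check the degenerate possibility that the peeled-off weight is $q = 1$ (which happens precisely when $X$ was already two-valued on $\{a_1, a_k\}$, handled by the base case) so that $1 - q > 0$ whenever we invoke the inductive hypothesis. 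No step beyond elementary arithmetic with finitely many atoms is needed.
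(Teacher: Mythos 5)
Your proof is correct and is essentially the paper's argument: an induction on the number of atoms in which one greedily peels off a two-valued mean-zero component that exhausts at least one atom, leaving a mean-zero distribution on fewer points. The only differences are cosmetic --- you pair the minimum value with the maximum (the paper pairs the smallest positive value with the negative value nearest zero, but any positive/negative pair works) and you phrase the peeling at the level of laws rather than by splitting an event $B \subset \{X=-b\}$ on the sample space.
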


It would be instructive to try proving this before reading the proof.

\begin{proof}
If $X\equiv 0$, we can let $n=1$, $Y_1 \equiv 0$, and $\theta \equiv 1$.
Otherwise, let $a, b > 0$ be the smallest numbers such that $\P(X=a) > 0$ and $\P(X=-b) > 0$.
We proceed by induction on the number of possible values of $X$.
If
\[ a\P(X=a) \le b\P(X=-b), \]
choose an event $B \subset \{X=-b\}$ such that $a\P(X=a) = b\P(B)$.
(If needed, we may let $[0,1)$ be the underlying probability space.)
Notice that $X$ is two-valued and mean-zero if conditioned on $\{X=a\} \cup B$.
On the other hand, conditioned on $(\{X=a\} \cup B)^c$, we see that $X$ has less number of possible values, so by the induction hypothesis, it has a distribution of a finite mixture of two-valued mean-zero random variables.
Combining the two parts, we conclude that $X$ also has a distribution of a finite mixture of two-valued mean-zero random variables.
An analogous proof works when
\[ a\P(X=a) > b\P(X=-b). \qedhere \]
\end{proof}

\begin{lemma}[CLT for simple random variables] \label{lem:clt_simple}
The CLT (Theorem~\ref{thm:clt}) holds if $X_1$ is simple.
\end{lemma}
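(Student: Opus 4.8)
The plan is to feed Lemma~\ref{lem:mixture} into Theorem~\ref{thm:dl} by a conditioning argument. By Lemma~\ref{lem:mixture} we may assume $X_1 = Y_\theta$, where $Y_1,\dots,Y_m$ are two-valued mean-zero random variables and $\theta$ is an independent $\{1,\dots,m\}$-valued index; put $p_j := \P(\theta=j)$ and $\sigma_j^2 := \Var Y_j$, so conditioning on $\theta$ gives $1 = \E[X_1^2] = \sum_{j=1}^m p_j\sigma_j^2$. Take mutually independent sequences $\theta_1,\theta_2,\dots$ (iid copies of $\theta$) and, for each $j$, $Y_j^{(1)},Y_j^{(2)},\dots$ (iid copies of $Y_j$), and for $k\ge 1$ set $C_k := \#\{l\le k:\theta_l=\theta_k\}$ and $X_k := Y_{\theta_k}^{(C_k)}$, so that $X_1,X_2,\dots$ are iid copies of $Y_\theta$. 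Rearranging the sum by type gives the pathwise identity $X_1+\cdots+X_n = \sum_{j=1}^m\big(Y_j^{(1)}+\cdots+Y_j^{(N_j)}\big)$, where $N_j := \#\{k\le n:\theta_k=j\}$; here $(N_1,\dots,N_m)$ is multinomial and, conditionally on it, the displayed $Y_j^{(i)}$ are independent with $Y_j^{(i)}\sim Y_j$ (conditioning on $(N_1,\dots,N_m)$ constrains only the $\theta_l$'s, which are independent of the arrays).

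Next I would check that Theorem~\ref{thm:dl} governs each type and assemble the pieces. A two-valued mean-zero $Y_j$ with values $u<v$ satisfies $Y_j = (v-u)\big(B-\P(Y_j=v)\big)$ for a Bernoulli $B$, with $\sigma_j = (v-u)\sqrt{\P(Y_j=u)\,\P(Y_j=v)}$, so Theorem~\ref{thm:dl} gives $\big(Y_j^{(1)}+\cdots+Y_j^{(N)}\big)/(\sigma_j\sqrt N)\tto Z$ as $N\to\infty$ when $\sigma_j>0$ (and $Y_j\equiv 0$ otherwise). Hence, for any deterministic counts $n_1(n),\dots,n_m(n)$ with $\sum_j n_j(n)=n$ and $n_j(n)/n\to p_j$, I would establish
\[ \frac{1}{\sqrt n}\sum_{j=1}^m\sum_{i=1}^{n_j(n)} Y_j^{(i)} \;\tto\; Z \]
as follows: each block satisfies $\frac{1}{\sqrt n}\sum_{i=1}^{n_j(n)}Y_j^{(i)} = \sqrt{n_j(n)/n}\cdot\frac{1}{\sqrt{n_j(n)}}\sum_{i=1}^{n_j(n)}Y_j^{(i)}\tto\sqrt{p_j}\,\sigma_j Z$ by Slutsky's lemma and $\sqrt{n_j(n)/n}\to\sqrt{p_j}$ (when $p_j=0$ the block has variance $n_j(n)\sigma_j^2/n\to 0$ and so tends to $0$ by Chebyshev); the $m$ blocks are independent, so they converge jointly, and the limit $\sum_j\sqrt{p_j}\,\sigma_j Z_j$ is a sum of independent centered normals of total variance $\sum_j p_j\sigma_j^2=1$, hence distributed as $Z$.

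It remains to remove the conditioning. Fix $x\in\R$ and set $h_n(n_1,\dots,n_m):=\P\big(\frac{1}{\sqrt n}\sum_j\sum_{i=1}^{n_j}Y_j^{(i)}\le x\big)$, so $\P\big((X_1+\cdots+X_n)/\sqrt n\le x\big)=\E\big[h_n(N_1,\dots,N_m)\big]$ by the previous paragraph's conditional description. By the weak law of large numbers $(N_1,\dots,N_m)/n\to(p_1,\dots,p_m)$ in probability, so there exist $\delta_n\downarrow 0$ with $\P\big(\max_j|N_j/n-p_j|\ge\delta_n\big)\to 0$. On the complementary event $h_n(N_1,\dots,N_m)$ is within $o(1)$ of $\P(Z\le x)$, uniformly: this uniformity over all count vectors within $\delta_n$ of $n(p_1,\dots,p_m)$ follows from the per-sequence convergence just established, because a family violating it could be spliced into one sequence with $n_j(n)/n\to p_j$ along which $h_n$ fails to converge to $\P(Z\le x)$. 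Since $h_n$ is bounded by $1$, the theorem follows.

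I expect the de-conditioning to be the only real friction: each type contributes a random number of summands, and the limiting variance $\sum_j p_j\sigma_j^2$ depends on the \emph{exact} proportions, so the slack $\delta_n$ must be driven to $0$ and the uniformity of $h_n$ argued with care. The other ingredients --- writing a two-valued mean-zero variable as an affine image of a Bernoulli, Slutsky's lemma, joint convergence of independent families, and $\sum_j p_j\sigma_j^2=1$ --- are all routine.
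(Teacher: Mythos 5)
Your proposal is correct and follows essentially the same route as the paper: decompose $X_1$ as a mixture of two-valued mean-zero variables via Lemma~\ref{lem:mixture}, apply the de Moivre--Laplace theorem to each type's block of summands, add the independent blocks, and then average over the random type frequencies. The only real difference is the de-conditioning step: the paper conditions on the whole sequence $(\theta_1,\dots,\theta_n)$ and uses the strong law of large numbers plus bounded convergence, which sidesteps the uniform-in-counts convergence (and the splicing argument) that your weak-law approach requires.
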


\begin{proof}
By Lemma~\ref{lem:mixture}, we may assume $X_n = Y_{n,\theta_n}$ where
\[ \{Y_{n,i}; n\in \N, i=1,\ldots,m\} \]
($m\in\N$) is an independent family of two-valued mean-zero random variables,
$Y_{1,i},Y_{2,i},\ldots$ are identically distributed for $i=1,\ldots,m$, and $\theta_1,\theta_2,\ldots \in \{1,\ldots,m\}$ are i.i.d.\ and independent from $Y$'s. We may assume $\P(\theta_1=i) > 0$ for $i=1,\ldots,m$.

Let $x \in \R$. Intuitively, we have
\[
\P\Bigl(\frac{X_1+\cdots+X_n}{\sqrt{n}} \le x \Bigr)
= \E\biggl[\P\Bigl(\frac{Y_{1,\theta_1}+\cdots+Y_{n,\theta_n}}{\sqrt{n}} \le x \Bigm| \theta_1,\ldots,\theta_n \Bigr) \biggr].
\]
We instead choose the following more concrete formulation:
\begin{equation} \label{eq:clt_cond}
\P\Bigl(\frac{X_1+\cdots+X_n}{\sqrt{n}} \le x \Bigr)
= \E\biggl[\P\Bigl(\frac{Y_{1,i_1}+\cdots+Y_{n,i_n}}{\sqrt{n}} \le x\Bigr)_{(i_1,\ldots,i_n) = (\theta_1,\ldots,\theta_n)}\biggr]
\end{equation}
where $p(i_1,\ldots,i_n)_{(i_1,\ldots,i_n)=(\theta_1,\ldots,\theta_n)}$ denotes $p(\theta_1,\ldots,\theta_n)$ for any function $p$.
(See \cite[Theorem~20.3]{Bil12}.)

If $i_1,i_2,\ldots \in \{1,\ldots,m\}$ satisfy
\[ \lim_{n\to\infty} \frac{\bigl| \{k : 1\le k \le n \text{ and } i_k = i\} \bigr|}{n} = \P(\theta_1 = i) \]
for $i=1,\ldots,m$, then
\[
\frac{1}{\sqrt{n}} \sum_{\substack{k=1,\ldots,n \\ i_k = i}} Y_{k,i_k}
= \frac{\sqrt{\E[Y_{1,i}^2]\P(\theta_1=i)}}{\sqrt{\E[Y_{1,i}^2]\P(\theta_1=i)n}} \sum_{\substack{k=1,\ldots,n \\ i_k = i}} Y_{k,i}
\tto \sqrt{\E[Y_{1,i}^2]\P(\theta_1 = i)} Z
\]
for $i=1,\ldots,m$ by the de Moivre--Laplace theorem (Theorem~\ref{thm:dl}).
Summing up for $i=1,\ldots,m$, we have
\[
\frac{Y_{1,i_1} + \cdots + Y_{n,i_n}}{\sqrt{n}} \tto \sqrt{\sum_{i=1}^m \E[Y_{1,i}^2]\P(\theta_1=i)} \cdot Z = Z.
\]

By (a relatively easily proved version \cite[Theorem~6.1]{Bil12} of) the strong law of large numbers, we have
\[
\lim_{n\to\infty} \frac{\bigl| \{k : 1\le k \le n \text{ and } \theta_k = i\} \bigr|}{n} = \P(\theta_1 = i)
\qquad \text{with probability $1$}
\]
for $i=1,\ldots,m$.
Thus,
\[
\lim_{n\to\infty}
\P\Bigl(\frac{Y_{1,i_1}+\cdots+Y_{n,i_n}}{\sqrt{n}} \le x\Bigr)_{(i_1,\ldots,i_n) = (\theta_1,\ldots,\theta_n)} = \P(Z\le x)
\]
with probability $1$.
By \eqref{eq:clt_cond}, it follows that
\begin{equation} \label{eq:conv_df_z}
\lim_{n\to\infty} \P\Bigl(\frac{X_1+\cdots+X_n}{\sqrt{n}} \le x \Bigr)
= \P(Z \le x).
\end{equation}
Since $x$ is arbitrary, we have \eqref{eq:conv_z}.
\end{proof}

\section{Generalization} \label{sec:gen}

We are left with a measure-theoretic argument which is rather standard.
A similar argument can be found in \cite[Subsection~2.2.1]{Tao12}.

\newcommand{\eY}{Y^{(\eta)}}

\begin{proof}[Proof of Theorem~\ref{thm:clt}]
Let $x \in \R$ be given, and let us show \eqref{eq:conv_df_z}.
For brevity, let $S_n := (X_1+\cdots+X_n)/\sqrt{n}$.
Let $\eps > 0$ be given, and let $\delta > 0$ be such that
\[ \P(Z \le x+\delta) < \P(Z \le x) + \eps. \]

Given $\eta > 0$, we can take an i.i.d.\ sequence $\eY_1,\eY_2,\ldots$ of simple random variables such that $\E[\eY_1] = 0$, $\E[(\eY_1)^2] = 1$, and
\begin{equation} \label{eq:ey_close}
\E[(X_i - \eY_i)^2] \le \eta \qquad \text{for $i=1,2,\dots$.}
\end{equation}
Let us write $T_n := (\eY_1+\cdots+\eY_n)/\sqrt{n}$.
By Lemma~\ref{lem:clt_simple},
\[ \lim_{n\to\infty} \P(T_n \le x + \delta) = \P(Z \le x+\delta)
< \P(Z \le x) + \eps. \]

By Chebyshev's inequality and \eqref{eq:ey_close}, we have
\[
\P(|S_n-T_n|>\delta) \le
\frac{1}{\delta^{2}n}\sum_{i=1}^n\E[(X_i-Y^{(\eta)}_i)^2]
\le \frac{\eta}{\delta^2}.
\]
If we let $\eta := \delta^2\eps$, the right side is $\eps$, and thus
\[
\begin{split}
\P(S_n \le x) &\le \P(|S_n-T_n| > \delta) + \P(T_n \le x+\delta) \\
&< \P(Z\le x) + 2\eps
\end{split}
\]
for all large $n$.
By a similar argument, once can prove that
\[ \P(S_n\le x) > \P(Z\le x) - 2\eps \]
for all large $n$. Since $\eps$ is arbitrary, we have \eqref{eq:conv_df_z}.
\end{proof}

\section*{Acknowledgements}
The author is supported in part by the National Research Foundation of Korea grants 2017R1A2B2001952 and 2019R1A5A1028324.
The author would like to thank Yuval Peres for helpful suggestions.



\begin{thebibliography}{2}

\bibitem{Bil12} Billingsley, P. (2012). \textit{Probability and measure}, Anniversary ed. Hoboken, NJ: John Wiley \& Sons, Inc.

\bibitem{Chu12} Chung, K. L., AitSahlia, F. (2003). \textit{Elementary probability theory with stochastic processes and an introduction to mathematical finance}, 4th ed. New York, NY: Springer Science+Business Media.

\bibitem{Dur19} Durrett, R. (2019). \textit{Probability: theory and examples}, Vol. 49, 5th ed. New York, NY: Cambridge university press.

\bibitem{Lin22} Lindeberg, J. W. (1922). Eine neue Herleitung des Exponentialgesetzes in der Wahrscheinlichkeitsrechnung.
\textit{Math. Z.} 15(1): 211--225.

\bibitem{DeM01} De Moivre, A. (1738). \textit{The doctrine of chances: or, a method of calculating the probabilities of events in play}, 2nd ed. London, England.

\bibitem{Ste86} Stein, C. (1986). \textit{Approximate Computation of Expectations}. Institute of Mathematical Statistics Lecture Notes --- Monograph Series, Vol. 7.  Hayward, CA: Institute of Mathematical Statistics.

\bibitem{Tao12} Tao, T. (2012). \textit{Topics in Random Matrix Theory}. Graduate Studies in Mathematics, 132. Providence, RI: American Mathematical Society.

\bibitem{Tro59} Trotter, H. F. (1959). An elementary proof of the central limit theorem. \textit{Arch. Math.} 10: 226--234. doi.org/10.1007/BF01240790

\end{thebibliography}
\end{document}